\def\msection{\@startsection{section} 
	{1} 
	{0pt} 
	{-1ex plus -.1ex minus -0.9ex} 
	{-.9ex plus -.2ex} 
	{\bfseries} 
}
\def\msubsection{\@startsection{subsection} 
	{2} 
	{0pt} 
	{-1ex plus -.1ex minus -0.2ex} 
	{-.9ex plus -.2ex} 
	{\normalfont} 
} 
\newcommand{\inv}{^{\raisebox{.2ex}{$\scriptscriptstyle-1$}}}  
\newtheorem{theorem}{Theorem}[section]
\newtheorem{proposition}[theorem]{Proposition}
\newtheorem{lemma}[theorem]{Lemma}
\theoremstyle{definition}
\numberwithin{equation}{section}
\newtheorem{question}[theorem]{Problem}
\numberwithin{equation}{section}
\begin{document}

\title{On the Extensions of the Cohen Structure Theorem}

\author{Amartya Goswami}

\address{[1]  Department of Mathematics and Applied Mathematics, University of Johannesburg, P.O. Box 524, Auckland Park 2006, South Africa. [2]  National Institute for Theoretical and Computational Sciences (NITheCS), South Africa.}

\email{agoswami@uj.ac.za}

\subjclass{13H99.}

\keywords{Cohen structure theorem, local algebras, Gelfand-Mazur theorem}

\begin{abstract}
The purpose of this note is to pose a question that, when answered, would directly imply the Cohen Structure Theorem. We provide a solution to this question for a specific class of local rings (not necessarily complete). We also explore how this question connects to the Gelfand-Mazur Theorem and, more broadly, to a fundamental theorem in Gelfand theory. Finally, we provide a categorical reformulation of the question.
\end{abstract}

\maketitle 
 
\section{Introduction}

Let $A$ be a local ring with maximal ideal $\mathfrak{m}$. $A$ is said to be an \emph{equicharacteristic}
local ring if $A$ has the same characteristic as its residue field $A/\mathfrak{m}$. A \emph{field of representatives} for $A$
is a subfield $\kappa$ in $A$ that maps onto $A/\mathfrak{m}$ under the canonical projection. Since
$\kappa$ is a field, the restriction of this mapping to $\kappa$ gives an isomorphism between $\kappa$ and $A/\mathfrak{m}$.
The
Cohen structure theorem  says that

\begin{theorem}[\cite{C46}]
An equicharacteristic complete local ring $A$ admits a field of representatives.
\end{theorem}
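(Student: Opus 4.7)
My plan is to construct a field of representatives $\kappa$ as a maximal subfield of $A$ obtained by Zorn's lemma, using the completeness of $A$ precisely at the algebraic steps where roots must be lifted from $A/\mathfrak{m}$ into $A$.

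First I would verify that $A$ contains its prime field. If $\mathrm{char}(A)=0$, then the equicharacteristic hypothesis forces every nonzero integer to be a unit of $A$ (its image in $A/\mathfrak{m}$ is nonzero), so the structure map $\mathbb{Z}\to A$ extends to $\mathbb{Q}\hookrightarrow A$. If $\mathrm{char}(A)=p>0$, then $\mathbb{F}_p\hookrightarrow A$ automatically. The poset of subfields of $A$ ordered by inclusion is thus nonempty, and any chain has its union as an upper bound (still a subfield), so Zorn's lemma yields a maximal subfield $\kappa\subset A$. It remains to show that the induced map $\bar{\pi}\colon\kappa\to A/\mathfrak{m}$ is surjective; injectivity is automatic.

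Suppose for contradiction there exists $\bar{a}\in A/\mathfrak{m}$ with $\bar{a}\notin\bar{\pi}(\kappa)$, and choose any lift $a\in A$. If $\bar{a}$ is transcendental over $\bar{\pi}(\kappa)$, then for every nonzero $f\in\kappa[X]$ the element $f(a)$ has nonzero image in $A/\mathfrak{m}$, hence is a unit of $A$; consequently $\kappa[a]\cong\kappa[X]$ is a domain inside $A$ all of whose nonzero elements are units, so $\kappa(a)\subset A$ properly contains $\kappa$, contradicting maximality. If $\bar{a}$ is separably algebraic over $\bar{\pi}(\kappa)$ with minimal polynomial $\bar{f}$, lift $\bar{f}$ to $f\in\kappa[X]$ via the isomorphism $\kappa\cong\bar{\pi}(\kappa)$; since $\bar{a}$ is a simple root of $\bar{f}$, completeness of $A$ permits Hensel's lemma to produce a root $a'\in A$ of $f$ with $\bar{\pi}(a')=\bar{a}$, and then $\kappa(a')\supsetneq\kappa$ sits inside $A$, again contradicting maximality.

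The main obstacle is the remaining case, in which $\mathrm{char}(A)=p$ and some $\bar{a}\in A/\mathfrak{m}$ is purely inseparable over $\bar{\pi}(\kappa)$: here Hensel's lemma fails for $X^p-c$ since the derivative vanishes identically. I would handle this by fixing a $p$-basis $\{\bar{b}_i\}_{i\in I}$ of $A/\mathfrak{m}$ over $\bar{\pi}(\kappa)$ and inductively constructing compatible lifts $b_i\in A$ by a sequence of $\mathfrak{m}$-adic approximations whose limits, supplied by completeness, assemble into a subring of $A$ that is in fact a field surjecting onto $A/\mathfrak{m}$. Engineering the compatibility level by level modulo $\mathfrak{m}^n$ is the delicate technical heart of Cohen's argument, and it is precisely here that completeness plays a non-formal role; the other cases reduce essentially to Hensel's lemma. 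This iterative construction is where I expect the main difficulty to lie.
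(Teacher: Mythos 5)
The paper itself gives no proof of this statement---it is quoted directly from Cohen's 1946 paper---so your proposal can only be measured against the classical argument, whose outline you reproduce correctly in the easy cases. The prime-field embedding, the transcendental case (where $f(a)$ is a unit because its residue $\bar{f}(\bar{a})$ is nonzero and nonunits of a local ring lie in $\mathfrak{m}$), and the separably algebraic case via Hensel's lemma are all standard and sound; in particular your argument is complete in characteristic zero, where every algebraic extension is separable. The genuine gap is the purely inseparable case in characteristic $p$: you announce a plan (lift a $p$-basis, build compatible approximations modulo $\mathfrak{m}^n$) but do not carry it out, and this deferred construction is not a routine verification---it is essentially the entire content of the theorem beyond Hensel's lemma.

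Worse, the Zorn-maximality framing on which your contradiction template rests actually breaks in that case, so the gap cannot be patched in place. A maximal subfield of $A$ need not be a field of representatives: take $A=\kappa_0\llbracket t\rrbracket$ with $\kappa_0=\mathds{F}_p(s)$ and let $\kappa$ be a maximal subfield containing $s^p+t$ (one exists, since $\mathds{F}_p(s^p+t)$ embeds in $A$ because $g(s^p+t)$ has nonzero residue $g(s^p)$ for every nonzero $g$). No subfield $M$ of $A$ containing $s^p+t$ can contain an element $m$ with residue $s$: writing $m=s+m_1t+\cdots$, the element $u=m^p-(s^p+t)$ lies in $M$, has residue $0$, and is nonzero because in characteristic $p$ the coefficient of $t$ in $m^p$ vanishes, so the coefficient of $t$ in $u$ is $-1$; thus $u$ would be a nonzero nonunit inside the field $M$, which is absurd. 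Hence your maximal $\kappa$ may face a residue $\bar{a}=s$ that is purely inseparable over its image while admitting no enlargement whatsoever, so ``contradicting maximality'' is unavailable there. The correct proof must discard the maximal subfield and construct the coefficient field directly from lifts of a $p$-basis, using completeness through subrings of the type $\bigcap_n A^{p^n}[\,b_i\,]$ as in Cohen's original argument (or Matsumura's treatment of Theorem 28.3); until that construction is executed, your proposal proves only the characteristic-zero and residually separable cases.
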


It is clear that if a ring $A$ contains a field $\kappa$, then it is equicharacteristic. We aim to formulate a problem for a much broader class of rings such that Cohen structure theorem becomes a special case of it. From now on, all our rings are assumed to be commutative and to possess an identity element.

\begin{question}
\label{mq}
Characterize a ring $A$ that contains a field $\kappa$ and satisfy that $A/\mathfrak{m}$ is isomorphic to $ \kappa$, for some maximal ideal $\mathfrak{m}$ in $A$. 
\end{question}

Problem \ref{mq} indeed makes sense. Here are some examples. 

\begin{enumerate}
\item[$\bullet$] Every field \( \kappa \) trivially has the property. 
	
\item[$\bullet$] Every polynomial ring \( \kappa[x_1, \dots, x_n] \) over a field $\kappa$ with maximal ideal \( \langle x_1, \dots, x_n\rangle\).	
	
\item[$\bullet$]
The localization \( \kappa[x]_{\langle x\rangle} \) of \( \kappa[x] \) at the maximal ideal $\langle x\rangle$.
	
\item[$\bullet$]  The ring of formal power series \( \kappa\llbracket x_1, \ldots, x_n\rrbracket \) over a field \( \kappa \) with maximal ideal $\langle x_1,\ldots, x_n\rangle$.
	
\item[$\bullet$] For a field $\kappa$, the ring \( \kappa \times \kappa \) with maximal ideals \( \kappa \times \{0\} \) and \( \{0\} \times \kappa \).
\end{enumerate}

To set up the ground, let us settle with the  notation and terminologies.  We denote the set of maximal ideals in a ring $A$ by $\mathrm{Max}(A),$ whereas the notation $\mathfrak{m}^*$  denotes the set of all non-zero elements in a maximal ideal $\mathfrak{m}.$ By $U(A)$, we denote the set of invertible elements in $A$. If $\kappa$ is a field, then the set of all invertible elements in $\kappa$  is denoted by  $\kappa^*$.   When $\kappa\subseteq A$, then we sometimes represent it as the canonical identity map $\iota\colon \kappa\to A,$ and we  write  $\iota(\kappa)$. Such a ring $A$ can be seen as a $\kappa$-algebra. Whenever we say `$A$ is a $\kappa$-algebra', we want to mean there exists a copy of the field $\kappa$ in $A$.  
The set of all $\kappa$-algebra homomorphisms from $A$ to $\kappa$ is denoted by $\mathrm{Hom}_{\mathrm{Alg}_{\kappa}}(A, \kappa).$   

\section{Some results}

We begin by considering a restricted version of Problem \ref{mq}: Which $\kappa$-algebra $A$ with $\kappa$ as the largest field in $A$ has the property that $A/\mathfrak{m}$ is isomorphic to $\kappa$, for some $\mathfrak{m}\in \mathrm{Max}(A)?$ 
The next theorem gives such a class of $\kappa$-algebras, but first, two lemmas.

\begin{lemma}\label{dsum}
Let $(A, \mathfrak{m})$ be a local $\kappa$-algebra. If \,$\kappa$ is the largest field in $A,$ then   \[U(A)=\{ u+m\mid u\in   \kappa^*\;\text{and}\; m\in \mathfrak{m} \}.\] 
\end{lemma}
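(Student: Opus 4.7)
The plan is to establish the two inclusions separately. The inclusion $\supseteq$ is the standard units-in-a-local-ring argument: for $u\in\kappa^*$ and $m\in\mathfrak{m}$, the element $u$ is already invertible in $\kappa\subseteq A$, hence $u+m=u(1+u\inv m)$ is a unit because $1+u\inv m\in 1+\mathfrak{m}\subseteq U(A)$. This direction does not use the maximality hypothesis on $\kappa$.

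For the inclusion $\subseteq$, observe that $U(A)=A\setminus\mathfrak{m}$, so the claim is equivalent to $A=\kappa+\mathfrak{m}$, i.e., to the surjectivity of the composition $\kappa\hookrightarrow A\twoheadrightarrow A/\mathfrak{m}$ (injectivity is automatic, since $\kappa\cap\mathfrak{m}=0$ because $\kappa$ is a field). I would argue by contradiction: suppose some unit $a\in A$ has residue class $\bar{a}$ outside the image $\iota(\kappa)$ in $A/\mathfrak{m}$, and use $a$ to produce a subfield of $A$ properly containing $\kappa$, contradicting the maximality hypothesis.

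The construction of the enlarging subfield splits according to the behaviour of $\bar{a}$ over $\iota(\kappa)$. If $\bar{a}$ is transcendental, then $a$ itself is transcendental over $\kappa$ in $A$ (any polynomial relation $q(a)=0$ with $q\in\kappa[x]$ would descend to $\bar{q}(\bar{a})=0$, forcing $q=0$), and every nonzero element of $\kappa[a]$ maps to a nonzero element of $A/\mathfrak{m}$, hence lies in $U(A)$. Inverting the multiplicative set $\kappa[a]\setminus\{0\}$ therefore yields an embedding of $\kappa(a)$ into $A$, a field strictly larger than $\kappa$. If $\bar{a}$ is algebraic, one would lift the minimal polynomial of $\bar{a}$ to some $p\in\kappa[x]$, observe that $p(a)\in\mathfrak{m}$, and then locate inside $A$ either an honest root of $p$ or an appropriate quotient of $\kappa[a]$ realising a nontrivial algebraic extension of $\kappa$.

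The main obstacle I anticipate is precisely this algebraic case: in an arbitrary local ring there is no general mechanism for lifting an algebraic relation from $A/\mathfrak{m}$ back to $A$ (a phenomenon that normally requires a Henselian or complete-type hypothesis). I would therefore expect the author either to exploit some structural property hidden in the phrase ``largest field'', or to argue that the failure of such a lift would itself enable the construction of a strictly larger subfield of $A$ by a different route, so that the maximality of $\kappa$ ultimately forces $\bar{a}$ to lie in $\iota(\kappa)$ and the inclusion $\subseteq$ to hold.
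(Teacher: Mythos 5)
Your inclusion $\supseteq$ and your reduction of the inclusion $\subseteq$ to the surjectivity of the composite $\kappa\hookrightarrow A\twoheadrightarrow A/\mathfrak{m}$ are both correct, and your transcendental case is handled rigorously. But the obstacle you flag in the algebraic case is not merely a difficulty you failed to overcome: it is fatal, because the lemma as stated is false. Take $A=\mathds{Q}[x]_{\langle x^2-2\rangle}$, a local ring whose maximal ideal $\mathfrak{m}$ is generated by $x^2-2$ and whose residue field is $\mathds{Q}(\sqrt{2})$, the identification being evaluation $f\mapsto f(\sqrt{2})$ (well defined since denominators are prime to $x^2-2$). Here $\mathds{Q}$ is the largest --- indeed the only --- subfield of $A$: any subfield $\kappa'$ contains the prime field $\mathds{Q}$, and if it contained some $f\notin\mathds{Q}$, then $f$, being a nonconstant element of $\mathds{Q}(x)$, is transcendental over $\mathds{Q}$ (as $\mathds{Q}$ is algebraically closed in $\mathds{Q}(x)$), while its residue $f(\sqrt{2})$ is algebraic over $\mathds{Q}$; taking $p\in\mathds{Q}[t]$ to be the minimal polynomial of $f(\sqrt{2})$, the element $p(f)\in\kappa'$ is nonzero (by transcendence of $f$) yet lies in $\mathfrak{m}$ (its residue is $p(f(\sqrt{2}))=0$), contradicting the fact that every nonzero element of the subfield $\kappa'$ must be a unit of $A$. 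Nevertheless $x$ is a unit of $A$ admitting no decomposition $u+m$ with $u\in\mathds{Q}^*$ and $m\in\mathfrak{m}$, since $x-u$ has residue $\sqrt{2}-u\neq 0$ for every $u\in\mathds{Q}$. This is precisely your algebraic case with no possible lift: $\overline{x}=\sqrt{2}$ is algebraic over $\kappa=\mathds{Q}$, but $t^{2}-2$ has no root in $A$, since such a root would be an element of $\mathds{Q}(x)$ algebraic over $\mathds{Q}$, hence already in $\mathds{Q}$.

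Comparing with the paper: the paper's proof of the inclusion $\subseteq$ consists of the single assertion that, corresponding to $\overline{a}$, ``there exist $u\in\kappa^*$ (since $\kappa$ is the largest field in $A$) and $m\in\mathfrak{m}$ such that $a=u+m$'' --- that is, it asserts without argument exactly the surjectivity you isolated as the crux. So your analysis does not miss a trick contained in the paper; it exposes a gap in the paper's own proof, and the counterexample above shows the gap cannot be repaired (it simultaneously refutes Theorem \ref{qfld}, since $A/\mathfrak{m}\cong\mathds{Q}(\sqrt{2})\not\cong\mathds{Q}$). What your transcendental-case argument genuinely establishes is the correct statement in this direction: if $\kappa$ is a maximal subfield of a local ring $A$, then $A/\mathfrak{m}$ is algebraic over the image of $\kappa$; the example shows this is all that follows. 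Your closing instinct is also exactly on target: lifting the algebraic relation is where a Henselian or completeness hypothesis must enter, and with completeness the desired conclusion is precisely the equicharacteristic case of the Cohen structure theorem that the paper set out to generalize.
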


\begin{proof}
Let \( a \in U(A) \). Then  the image \(\overline{a}\) of $a$ in the field \( A/\mathfrak{m} \) is invertible. Corresponding to this \(\overline{a}\),
there exist \( u \in \kappa^* \)  (since  $\kappa$  is the largest field in $A$) and \( m \in \mathfrak{m} \) such that \( a = u + m \). This shows that
\[
U(A) \subseteq \{ u + m \mid u \in \kappa^*, \, m \in \mathfrak{m} \}.
\]
Now, consider an element \( u + m \), where \( u \in \kappa^* \) and \( m \in \mathfrak{m} \). We claim that \( u + m \in U(A)\). Indeed, $u+m \in \mathfrak{m}$ would imply $(u+m)-m=u \in \mathfrak{m},$ a contradiction. 
Since $u+m\notin \mathfrak{m}$ and since $\mathfrak{m}$ is the unique maximal ideal in $A$, we infer that $u+m$ is invertible. 
\end{proof}

\begin{lemma}\label{mt1}
If $A$ is an $\kappa$-algebra and $\mathfrak{m}\in \mathrm{Max}(A),$ then $A/\mathfrak{m}$ is isomorphic to $\kappa$ if and only if $A= \kappa\oplus \mathfrak{m}.$  Moreover, if $\kappa'$ is a field with $A= \kappa'\oplus \mathfrak{m},$ then $\kappa'$ is isomorphic to $\kappa.$  
\end{lemma}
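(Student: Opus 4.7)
The plan is to handle the equivalence and the uniqueness statement separately, in each case exploiting the fact that a nonzero element of any subfield of $A$ must be a unit of $A$, hence cannot lie in $\mathfrak{m}$.

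For the forward implication, assume $A/\mathfrak{m}\cong\kappa$. First I would show $\iota(\kappa)\cap\mathfrak{m}=\{0\}$: if $x\in\kappa^{*}$ were in $\mathfrak{m}$, then $x$ would be invertible in $A$ and simultaneously lie in the proper ideal $\mathfrak{m}$, which is impossible. Next I would produce the sum: given $a\in A$, its image $\overline{a}\in A/\mathfrak{m}$ corresponds, via the isomorphism $A/\mathfrak{m}\cong\kappa$, to a unique $u\in\kappa$; then $a-\iota(u)\in\mathfrak{m}$, so $a\in\iota(\kappa)+\mathfrak{m}$. Combined with the trivial intersection, this gives the internal direct sum $A=\kappa\oplus\mathfrak{m}$.

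For the reverse implication, suppose $A=\kappa\oplus\mathfrak{m}$. The composite $\varphi\colon\kappa\xrightarrow{\iota}A\xrightarrow{\pi}A/\mathfrak{m}$, where $\pi$ is the canonical projection, is a ring homomorphism. Its kernel is $\iota(\kappa)\cap\mathfrak{m}$, which is $\{0\}$ by the direct sum hypothesis (and also, as above, because $\kappa$ is a field and $\mathfrak{m}$ is proper). For surjectivity, any $a\in A$ has the form $a=\iota(u)+m$ with $u\in\kappa$, $m\in\mathfrak{m}$, so $\overline{a}=\varphi(u)$. Hence $\varphi$ is an isomorphism.

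For the moreover clause, if $\kappa'$ is a field with $A=\kappa'\oplus\mathfrak{m}$ in the analogous sense, then applying the reverse implication with $\kappa'$ in place of $\kappa$ yields $A/\mathfrak{m}\cong\kappa'$, while the hypothesis $A/\mathfrak{m}\cong\kappa$ follows from the corresponding decomposition $A=\kappa\oplus\mathfrak{m}$ (which in the context of the lemma we may assume, or we simply invoke the first equivalence). Composing the two isomorphisms gives $\kappa'\cong\kappa$. I do not anticipate any real obstacle here; the only point that requires any care is verifying $\iota(\kappa)\cap\mathfrak{m}=\{0\}$, and that follows at once from $\kappa$ being a field together with properness of $\mathfrak{m}$.
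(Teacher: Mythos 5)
Your proof is correct and takes essentially the same route as the paper's: both directions reduce to the canonical projection restricted to $\iota(\kappa)$, with the decomposition $A=\kappa\oplus\mathfrak{m}$ coming from surjectivity of that restriction together with the trivial intersection $\iota(\kappa)\cap\mathfrak{m}=\{0\}$. If anything, you are more careful than the paper, which leaves the intersection check tacit and dismisses the \emph{moreover} clause as obvious, whereas you verify both explicitly and rightly observe that the clause requires the standing hypothesis $A/\mathfrak{m}\cong\kappa$ (equivalently $A=\kappa\oplus\mathfrak{m}$) and not merely the existence of the subfield $\kappa$.
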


\begin{proof}	First we show that if  $\pi\colon  A\to A/\mathfrak{m} $ is an isomorphism, then  $A= \kappa \oplus \mathfrak{m}.$ Since, in particular, $\pi$ is surjective, we have $\pi(\kappa)=A/\mathfrak{m},$ and  hence \[\pi\inv \pi  (\kappa)= \kappa\oplus \mathrm{ker}(\pi)=\kappa\oplus \mathfrak{m}= \pi\inv(A/\mathfrak{m})=A.\] To prove the converse, let $A= \kappa\oplus \mathfrak{m}.$ This implies that \[A/\mathfrak{m}=\pi(A)=\pi(\kappa\oplus \mathfrak{m})=\pi(\kappa)\oplus \pi(\mathfrak{m})=\pi(\kappa),\] that is, $\pi$ is injective. Since $\pi$ is also surjective, $\pi$ is an isomorphism. The proof of the second part of the theorem is obvious. 
\end{proof}

\begin{theorem}\label{qfld}
If $(A, \mathfrak{m})$ is a local $\kappa$-algebra with $\kappa$ the largest field in $A,$ then  $A/\mathfrak{m}$ is isomorphic to $\kappa.$  
\end{theorem}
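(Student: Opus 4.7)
The plan is to reduce the statement to the criterion supplied by Lemma \ref{mt1}: it suffices to show that $A = \kappa \oplus \mathfrak{m}$ as abelian groups (equivalently, as $\kappa$-vector spaces). Once we have this decomposition, Lemma \ref{mt1} immediately yields the isomorphism $A/\mathfrak{m} \cong \kappa$.

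First I would establish that $A = \kappa + \mathfrak{m}$. Since $(A,\mathfrak{m})$ is local, every element $a \in A$ is either a non-unit, in which case $a \in \mathfrak{m} \subseteq \kappa + \mathfrak{m}$, or a unit. For the unit case I invoke Lemma \ref{dsum}: because $\kappa$ is the largest field inside $A$, every $a \in U(A)$ can be written $a = u + m$ with $u \in \kappa^\ast$ and $m \in \mathfrak{m}$, so $a \in \kappa + \mathfrak{m}$ again. Hence $A = \kappa + \mathfrak{m}$.

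Next I would show that the sum is direct, i.e.\ $\kappa \cap \mathfrak{m} = \{0\}$. If $a \in \kappa \cap \mathfrak{m}$ were nonzero, then $a \in \kappa^\ast$ would be a unit of $A$, contradicting $a \in \mathfrak{m}$ (the maximal ideal of a local ring contains no units). Therefore $A = \kappa \oplus \mathfrak{m}$, and Lemma \ref{mt1} gives $A/\mathfrak{m} \cong \kappa$.

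There is essentially no hard step here; the whole argument is driven by the two preceding lemmas. The only point that deserves care is the split into the unit and non-unit cases in the first paragraph, since the hypothesis that $\kappa$ is the \emph{largest} field in $A$ enters only through Lemma \ref{dsum}, and that is precisely what lets the unit case be handled. Once the decomposition $A = \kappa + \mathfrak{m}$ is in hand, directness of the sum is a one-line observation about units versus the maximal ideal.
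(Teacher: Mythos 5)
Your proof is correct, but it is not the argument the paper actually writes out. The paper's displayed proof of Theorem \ref{qfld} is a direct construction: it defines $\varphi\colon A/\mathfrak{m}\to\kappa$ by $\varphi(a+\mathfrak{m})=a$ for $a\in A\setminus\mathfrak{m}$ and checks well-definedness, injectivity, and surjectivity by hand. Interestingly, the paper opens that proof by remarking that the theorem ``follows from Lemma \ref{mt1} and Lemma \ref{dsum}'' before opting for the ``alternative and independent'' construction --- so what you have done is write out in full the one-line route the paper mentions but never details: Lemma \ref{dsum} handles the units ($a=u+m$ with $u\in\kappa^*$, $m\in\mathfrak{m}$), locality handles the non-units ($a\in\mathfrak{m}$), giving $A=\kappa+\mathfrak{m}$; the intersection $\kappa\cap\mathfrak{m}=\{0\}$ because nonzero elements of $\kappa$ are units and a proper ideal contains none; then the ``if'' direction of Lemma \ref{mt1} concludes. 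Your modular version has a genuine advantage: it isolates exactly where the hypothesis that $\kappa$ is the \emph{largest} field enters (only through Lemma \ref{dsum}), whereas the paper's direct proof tacitly treats an arbitrary representative $a\in A\setminus\mathfrak{m}$ as an element of $\kappa$ (e.g.\ in the well-definedness step, where $a$ and $b$ are declared ``invertible elements in $\kappa$''), which is not literally true of every unit of $A$ and really amounts to invoking the decomposition $a=u+m$ without saying so; your argument makes that step explicit and is therefore the more careful of the two. What the paper's construction buys in exchange is self-containedness --- it does not depend on the two lemmas --- though at the cost of the imprecision just noted.
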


\begin{proof}
Although the proof follows from Lemma \ref{mt1} and Lemma \ref{dsum},
however, here is an alternative and independent proof.

Define a  map
$\varphi\colon A/\mathfrak{m} \to \kappa$ by
\( \varphi(a + \mathfrak{m}) = a \) for \( a \in A \setminus \mathfrak{m} \). This map is well-defined because if \( a + \mathfrak{m} = b + \mathfrak{m} \), then \( a - b \in \mathfrak{m} \). Since \( a \) and \( b \) are invertible elements in \(\kappa\), it follows that  \(a-b=0\).
The map \( \varphi \) is injective. Indeed: if \( \varphi(a + \mathfrak{m}) = 0 \) in \(\kappa\), then \( a \in \mathfrak{m}\), meaning that \( a + \mathfrak{m} = \mathfrak{m}\). Thus, the kernel of \( \varphi \) consists only of the zero element \( \mathfrak{m} \).
Now, for any element \( u \in \kappa^* \), which is a unit in \( A \), we can find \( u + \mathfrak{m} \in A/\mathfrak{m} \) such that \( \varphi(u + \mathfrak{m}) = u \). Thus,  \( \varphi \) is surjective. 
\end{proof}

Is the unique maximality of $\kappa$ in Theorem \ref{qfld} preserved under the converse formulation? We have a positive answer with respect to the maximality, however, we are not certain on the uniqueness part. Notice that in the next result we do not need $A$ to be local.

\begin{proposition}\label{mt2}
If $\kappa$ is a field in $A$ and $\kappa$ is isomorphic to $A/\mathfrak{m}$ for some maximal ideal $\mathfrak{m}$ in $A$,  then $\kappa$ is a maximal field in $A$.
\end{proposition}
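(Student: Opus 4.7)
The plan is to show that any field $\kappa'$ sitting inside $A$ with $\kappa \subseteq \kappa'$ must coincide with $\kappa$. The first step is to translate the hypothesis into a usable algebraic form: by Lemma~\ref{mt1}, the assumption that $A/\mathfrak{m}$ is isomorphic to $\kappa$ is equivalent to the direct sum decomposition $A = \kappa \oplus \mathfrak{m}$, and this is what I would work with throughout.

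Suppose then, for contradiction, that $\kappa' \supseteq \kappa$ is a field in $A$ with $\kappa' \neq \kappa$. The central observation is that $\kappa' \cap \mathfrak{m} = \{0\}$: any nonzero element of $\kappa'$ is invertible inside $\kappa'$, hence invertible in the ambient ring $A$, so it cannot belong to the proper ideal $\mathfrak{m}$. Now pick any $x \in \kappa'$ and apply the decomposition from Lemma~\ref{mt1} to write $x = k + m$ with $k \in \kappa$ and $m \in \mathfrak{m}$. Because $\kappa \subseteq \kappa'$, the element $m = x - k$ lies in $\kappa'$, so $m \in \kappa' \cap \mathfrak{m} = \{0\}$, which forces $x = k \in \kappa$. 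Therefore $\kappa' \subseteq \kappa$, contradicting the strict inclusion, and $\kappa$ is maximal as claimed. Note that locality of $A$ plays no role; only the properness of $\mathfrak{m}$ matters, which explains the remark preceding the statement.

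I do not anticipate a genuine obstacle. Once Lemma~\ref{mt1} is in hand, the argument reduces to the elementary tension between being a unit (in a field) and lying in a proper ideal. The only subtle point is interpreting `$\kappa$ is isomorphic to $A/\mathfrak{m}$' as meaning via the restriction of the canonical projection $\pi\colon A \to A/\mathfrak{m}$ to $\kappa$; but since $\kappa$ is already a subfield of $A$ and $\pi|_\kappa$ is a nonzero field homomorphism, this is automatic, and in any case it is how Lemma~\ref{mt1} has been formulated and used.
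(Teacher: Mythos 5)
Your core argument is correct and runs on the same engine as the paper's proof, namely the decomposition $A=\kappa\oplus\mathfrak{m}$, but your endgame is genuinely sharper. The paper supposes $\kappa\subsetneq\kappa'\subsetneq A$, pushes both fields through the projection $\pi$ to get $\pi(\kappa)=\pi(\kappa')=A/\mathfrak{m}$, pulls back to $\kappa\oplus\mathfrak{m}=A=\kappa'\oplus\mathfrak{m}$, and then concludes only that $\kappa$ is \emph{isomorphic} to $\kappa'$. Strictly read, that conclusion does not establish maximality: a field can be isomorphic to a proper subfield (e.g.\ $\mathds{Q}(x^2)\subsetneq \mathds{Q}(x)$), so ``isomorphic'' alone does not contradict the strict inclusion. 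Your extra step --- writing $x=k+m$ with $k\in\kappa\subseteq\kappa'$, deducing $m=x-k\in\kappa'\cap\mathfrak{m}=\{0\}$ from the fact that nonzero elements of a subfield are units of $A$ and hence avoid the proper ideal $\mathfrak{m}$, and concluding $\kappa'=\kappa$ on the nose --- is exactly the step the paper omits, and it is what the literal statement requires. Your observation that locality plays no role also matches the paper's remark preceding the proposition.

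One caveat: your closing claim that reading ``$\kappa$ is isomorphic to $A/\mathfrak{m}$'' as ``via the restriction of $\pi$'' is \emph{automatic} is wrong. The restriction $\pi|_\kappa\colon\kappa\to A/\mathfrak{m}$ is a nonzero field homomorphism, hence injective, but nothing forces it to be surjective, so an abstract isomorphism $\kappa\cong A/\mathfrak{m}$ does not by itself give $A=\kappa\oplus\mathfrak{m}$. Indeed, under the abstract reading the proposition is false: take $A=\mathds{Q}(x)$ with its maximal ideal $\mathfrak{m}=0$ and $\kappa=\mathds{Q}(x^2)$; then $\kappa$ is a field in $A$ and $\kappa\cong A/\mathfrak{m}=A$, yet $\kappa$ is not maximal in $A$. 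Your proof (like the paper's, which uses $\pi(\kappa)=A/\mathfrak{m}$ directly) is valid precisely under the hypothesis that $\pi|_\kappa$ is an isomorphism, i.e.\ that $\kappa$ is a field of representatives for $\mathfrak{m}$; you should state this as the intended reading of Lemma~\ref{mt1} rather than assert it is automatic. With that hypothesis made explicit, your argument stands and in fact repairs the weak conclusion of the paper's own proof of Proposition~\ref{mt2}.
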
 

\begin{proof}
If possible, suppose there is field $\kappa'$ such that  $\kappa\subsetneq \kappa'\subsetneq A.$ Then $\pi(\kappa)\subseteq \pi(\kappa')\subseteq \pi(A)=A/\mathfrak{m}.$ But $\pi(\kappa)=A/\mathfrak{m},$ and this implies that $\pi(\kappa)=\pi(\kappa'),$ which subsequently gives \[\pi\inv \pi (\kappa)=\kappa\oplus \mathfrak{m}=A=\pi\inv\pi(\kappa')=\kappa'\oplus \mathfrak{m}.\] Hence, $\kappa$ is isomorphic to $\kappa'.$
\end{proof}

When $\kappa$ is the field $\mathds{C}$ of complex numbers, the following example brought to my attention by Partha Pratim Ghosh, and this example, once again, gives evidence in favour of Problem \ref{mq}.

Consider a Tychonoff topological space \( X \) and the ring \( C(X,\mathds{C}) \) of all continuous complex-valued functions on \( X \). Obviously, \( C(X,\mathds{C}) \) is a commutative unitary ring as well as a vector space over \( \mathds{C} \) with \[ z(fg) = (zf)g = f(zg), \] for any \( z \in \mathds{C}\) and \( f, \) \(g \in C(X,\mathds{C}) \). The ring  has a copy of \( \mathds{C} \) embedded in it as a \( \mathds{C} \)-subalgebra.
Let \( \mathfrak{m} \) be a maximal ideal of \( C(X,\mathds{C}) \); obviously \( C(X,\mathds{C})/\mathfrak{m}\) is a field. The assignment \( p \mapsto p/\mathfrak{m}\) is an embedding of \( \mathds{C} \) in \( C(X,\mathds{C})/\mathfrak{m}\), and hence the quotient contains a copy of \( \mathds{C} \). If \( \mathfrak{m} \) be fixed, that is, \[ \bigcap_{f \in \mathfrak{m}} \{ x \in X \mid f(x) = 0 \} \] is non-empty, then from the Tychonoff property it is a singleton set, and in such cases the quotient map is an isomorphism. 

The choice of the field $\mathds{C}$ for our original problem leads to another question on generalization of a well-known result in ``Gelfand theory'', namely

\begin{theorem}[\cite{Dal03}]
If $A$ is a unital, commutative Banach algebra over $\mathds{C}$, then there is a bijection between   $\mathrm{Max}(A)$
and the nonzero multiplicative linear functionals on $A$.
\end{theorem}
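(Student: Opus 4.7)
The plan is to define two mutually inverse maps between $\mathrm{Max}(A)$ and the set $\Delta(A)$ of nonzero multiplicative linear functionals on $A$, and verify each is well-defined. The natural candidates are $\varphi \mapsto \ker\varphi$ in one direction, and $\mathfrak{m} \mapsto \psi_{\mathfrak{m}} \circ \pi_{\mathfrak{m}}$ in the other, where $\pi_{\mathfrak{m}}\colon A \to A/\mathfrak{m}$ is the quotient projection and $\psi_{\mathfrak{m}}\colon A/\mathfrak{m} \to \mathds{C}$ is the isomorphism supplied by the Gelfand--Mazur theorem.

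For the forward map, any $\varphi \in \Delta(A)$ satisfies $\varphi(1) = \varphi(1)^2$, and nonzeroness forces $\varphi(1) = 1$; combined with $\mathds{C}$-linearity, this makes $\varphi$ a surjective $\mathds{C}$-algebra homomorphism. By the first isomorphism theorem $A/\ker\varphi \cong \mathds{C}$ is a field, so $\ker\varphi$ is a maximal ideal. This step is purely algebraic and does not invoke the Banach norm at all.

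The reverse direction is the analytic heart of the argument and proceeds in two stages. First I would show every maximal ideal $\mathfrak{m}$ is closed. The key fact is that $U(A)$ is open: if $\|1-a\| < 1$, the Neumann series $\sum_{n \geq 0}(1-a)^n$ converges and inverts $a$. Hence the closure of any proper ideal still avoids $U(A)$ and remains proper; applied to $\mathfrak{m}$, maximality gives $\overline{\mathfrak{m}} = \mathfrak{m}$. Consequently $A/\mathfrak{m}$ carries a well-defined quotient norm under which it becomes a unital commutative complex Banach algebra that is also a field. At this point the Gelfand--Mazur theorem applies, yielding an isometric $\mathds{C}$-algebra isomorphism $\psi_{\mathfrak{m}}\colon A/\mathfrak{m} \to \mathds{C}$, and $\psi_{\mathfrak{m}} \circ \pi_{\mathfrak{m}}$ is a nonzero multiplicative linear functional with kernel exactly $\mathfrak{m}$.

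Finally I would verify the two maps are mutually inverse. Starting from $\mathfrak{m}$ and returning the kernel of $\psi_{\mathfrak{m}} \circ \pi_{\mathfrak{m}}$ recovers $\mathfrak{m}$ tautologically. Conversely, given $\varphi$ with kernel $\mathfrak{m}$, both $\varphi$ and $\psi_{\mathfrak{m}} \circ \pi_{\mathfrak{m}}$ descend to $\mathds{C}$-algebra isomorphisms $A/\mathfrak{m} \to \mathds{C}$, and these can differ only by a $\mathds{C}$-algebra automorphism of $\mathds{C}$; since the only such automorphism is the identity, the two functionals coincide. The principal obstacle lies in the reverse direction, which rests on two nontrivial inputs: the analytic fact that $U(A)$ is open (forcing closedness of maximal ideals), and the Gelfand--Mazur theorem itself, whose role in this circle of ideas the paper has already highlighted through its link with Problem \ref{mq}.
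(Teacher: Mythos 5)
Your proof is correct, but there is nothing in the paper to compare it against: the paper states this theorem purely as imported background, citing \cite{Dal03}, and gives no proof of its own. Your argument is the standard textbook one and all of its steps are sound. In the forward direction, $\varphi(1)=\varphi(1)^2$ together with nonzeroness gives $\varphi(1)=1$, so $\ker\varphi$ is maximal by the first isomorphism theorem, with no analysis needed. In the reverse direction, the two analytic inputs you identify are exactly the right ones: openness of $U(A)$ via the Neumann series $\sum_{n\geq 0}(1-a)^n$ shows the closure of a proper ideal is still proper (closures of ideals are ideals, and $\overline{\mathfrak{m}}$ avoids the open set $U(A)$), so $\mathfrak{m}$ is closed, the quotient norm makes $A/\mathfrak{m}$ a unital commutative Banach field, and Gelfand--Mazur applies. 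Your mutual-inverse check is also complete, including the point most often glossed over, namely that the induced isomorphism $A/\mathfrak{m}\to\mathds{C}$ is unique because the only $\mathds{C}$-algebra automorphism of $\mathds{C}$ is the identity; this simultaneously shows the map $\mathfrak{m}\mapsto\psi_{\mathfrak{m}}\circ\pi_{\mathfrak{m}}$ does not depend on the choice of $\psi_{\mathfrak{m}}$. The only cosmetic caveat: the word \emph{isometric} in your appeal to Gelfand--Mazur presupposes the normalization $\|1\|=1$, but this is immaterial since the bijection only requires an algebra isomorphism. One could add that your argument nowhere assumes continuity of the functionals, which matches the statement as given (multiplicative linear functionals on a unital Banach algebra are automatically continuous, but that fact is not needed for the bijection).
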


It is easy to see that as a very special case of the above, we have following celebrated result.

\begin{theorem}[Gelfand-Mazur Theorem] If $A$ is a unital, commutative Banach algebra over $\mathds{C}$ in which every nonzero element is invertible, then $A$ is isometrically isomorphic to $\mathds{C}$.
\end{theorem}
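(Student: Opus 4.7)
The plan is to deduce this theorem immediately from the preceding Gelfand-theoretic bijection. Since $A$ is commutative and every nonzero element is invertible, $A$ is itself a field, so its unique maximal ideal is $\mathfrak{m}=\{0\}$. Applying the preceding theorem to this $\mathfrak{m}$ produces a corresponding nonzero multiplicative linear functional $\chi\colon A\to\mathds{C}$.

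Next I would verify that $\chi$ is a $\mathds{C}$-algebra isomorphism. Since $A$ is a field, $\ker\chi$ must be either $\{0\}$ or $A$; the latter is excluded by $\chi(1_A)=1$, so $\chi$ is injective. Surjectivity is immediate from $\chi(\lambda\cdot 1_A)=\lambda$ for every $\lambda\in\mathds{C}$, using that $A$ is a unital $\mathds{C}$-algebra.

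To upgrade to an isometric isomorphism, I use that injectivity of $\chi$ forces every $a\in A$ to satisfy $a=\chi(a)\cdot 1_A$, since both sides have the same image under $\chi$. Combined with the standard normalization $\|1_A\|=1$ for a unital Banach algebra, this yields $\|a\|=|\chi(a)|\cdot\|1_A\|=|\chi(a)|$, so $\chi$ is isometric.

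The deduction itself is mostly bookkeeping; the genuine content sits upstream, in the preceding theorem. Were one instead aiming for a self-contained proof, the main obstacle would be showing that the spectrum of any $a\in A$ is nonempty---handled classically via Liouville's theorem applied to the entire resolvent $\lambda\mapsto(a-\lambda\cdot 1_A)^{-1}$---from which the same conclusion $a=\lambda\cdot 1_A$ for some $\lambda\in\mathds{C}$ would follow directly.
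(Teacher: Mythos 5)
Your deduction is correct and matches the paper's intended route: the paper offers no separate proof, stating only that the Gelfand--Mazur theorem follows ``as a very special case'' of the preceding bijection between $\mathrm{Max}(A)$ and the nonzero multiplicative linear functionals, and your argument (the field $A$ has unique maximal ideal $\{0\}$, the resulting functional $\chi$ is injective since its kernel is a proper ideal of a field, hence $a=\chi(a)\cdot 1_A$ and the isometry follows from $\|1_A\|=1$) is exactly the bookkeeping the paper leaves implicit. No gaps; your closing remark correctly locates the genuine analytic content (nonemptiness of the spectrum via Liouville) upstream in the cited theorem rather than in this deduction.
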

 Let us ask a general question on $\mathds{C}$-algebras of which the last two results will be special cases.

\begin{question}\label{qfa}
For which $\mathds{C}$-algebra $A,$ we have a bijection between the sets $\mathrm{Max}(A)$ and $\mathrm{Hom}_{\mathrm{Alg}_{\mathds{C}}}(A, \mathds{C})?$ 
\end{question}

Notice that Problem \ref{qfa} is a special case of our original Problem \ref{mq}, for $\kappa=\mathds{C}$ and enforcing the isomorphism $A/\mathfrak{m} \approx\kappa$, for every $\mathfrak{m} \in \mathrm{Max}(A)$. To see that this is a valid question, let us consider the $\mathds{C}$-algebra $\mathds{C}[x],$ where \(\mathrm{Max}\left(\mathds{C}[x]\right)=\{ \langle x-a \rangle\mid a\in \mathds{C}\},\) and for each $\langle x-a \rangle\in \mathrm{Max}(\mathds{C}[x]),$ we have $\mathds{C}[x]/\langle x-a \rangle$ isomorphic to $\mathds{C}.$  

\section{A categorical formulation}

Finally, we shall see a ``structural formulation'' of our Problem \ref{mq}. The following categorical version  has been formulated for me by George Janelidze.

Let $R$ be a ring and $I$ be an ideal of $R$. The ring $R$ is said to be \emph{complete with respect to} $I$ if 
\[R=\lim\limits_{n}\left(R/I^n\right).\]
In particular, $R$ is said to be \emph{complete} if $R$ is a local ring complete with respect to its unique maximal ideal.
Let us consider the following two extreme cases:

\begin{enumerate}
\item[$\bullet$] If $I$ is idempotent (that is, $I^2=I$), then $\lim_{n}\left(R/I^n\right)=R/I$, and so $R$ is complete with respect to $I$ if and only if $I=0$.
	
\item[$\bullet$] If $I$ is nilpotent (that is, $I^n=0$, for some $n\in \mathds{N}$), then $\lim_{n}\left(R/I^n\right)=R/0=R,$ and so $R$ is always complete with respect to $I$.
\end{enumerate}

We are interested in the situations where the canonical homomorphism $R\to R/I$ is a split epimorphism of rings (with $1$). In this case, we have the diagram:
\[
\xymatrix@R=1cm@C=1cm{
	I\ar@{=}[d]\ar[r]^{i} & R\ar@<.4ex>[r]^p\ar@<.4ex>[d]^{\psi} & R/I\ar@{=}[d]\ar@<.4ex>[l]^s\\
	I\ar[r]^(.3){\iota_1} & I\rtimes (R/I)\ar@<.4ex>[r]^(.6){\pi_2}\ar@<.4ex>[u]^{\phi} & R/I\ar@<.4ex>[l]^(.4){\iota_2}
}
\]
in the category fo non-unital commutative rings, in which:

\begin{enumerate}
	\item[$\bullet$] $p$ is defined by $p(r)=r+I$;
	
	\item[$\bullet$] $s$ is any unital ring homomorphism with $ps=1_{R/I};$
	
	\item[$\bullet$] the underlying abelian group of $I\rtimes  (R/I)$ is $I\times (R/I)$;
	
	\item[$\bullet$] the multiplication of $R\rtimes(R/I)$ is defined by \[(x,u)(y,v)=(xy+s(v)x+s(u)y, uv);\]
	
	\item[$\bullet$] the remaining homomorphisms are defined by 
	\begin{itemize}
		\item[]$\bullet$ $i(x)=x$;
		
		\item[]$\bullet$ $\phi(x,u)=x+s(u)$;
		
		\item[]$\bullet$ $\psi(r)=(r-sp(r), p(r))$;
		
		\item[]$\bullet$ $\iota_1(x)=(x,0)$;
		
		\item[]$\bullet$ $\iota_2(x,u)=u$;
		
		\item[]$\bullet$ $\iota_2(u)= (0,u)$.
	\end{itemize}
	\item[$\bullet$] $I\rtimes (R/I)$ is a unital ring with $1=(0,1)$, and $\phi$, $\psi$, $\pi_2$, and $\iota_2$ preserve $1$.
	
	\item[$\bullet$] it is easy to check that 
	\[\phi\iota_1=i, p\phi=\pi_2, \psi s=\iota_2,\pi_2\iota_2=1_{R/I},\]
	and that $\phi$ and $\psi$ are isomorphisms inverse to each other;
	
	\item[$\bullet$] $s$ makes $R$ an $R/I$-algebra, $I$ and $R/I$-subalgebra of it, and (therefore) makes $i$ a homomorphism of $R/$-algebras. Of course, $R$ is unital, whereas $I$ does not have to be so. 
\end{enumerate}
This determines an equivalence $\mathds{S}\text{plit}\mathds{E}\text{xt}\sim \mathds{A}\text{lg}$ between the triples $(R,I,s)$ above and pairs $(K,A)$, where $K$ is a unital commutative ring and $A$ (not necessarily) commutative $K$-algebra. For a pair $(K,A)$, the corresponding triple is
\[(A \rtimes K, \{(a,k)\in A \rtimes K\mid k=0\},\quad k\mapsto (0,k)),\]
where in $A \rtimes K$, we have
\[ (a,k)(b,l)=(ab+la+kb,kl).\]
With the above set up, let us now reformulate our main questions.

\begin{question}
Is there any example, where
$K$ is a field and
$A\rtimes K$ is local but not complete?
\end{question}

\begin{question}
Can we characterize $A\rtimes K$ in such a way that Cohen structure theorem becomes a consequence of it?
\end{question}

Once again, in two extreme cases, we have the following scenarios:
\begin{itemize}
\item[$\bullet$] If $A$ is unital with $1\neq 0$, then $A\rtimes K$ is not local. Indeed:
\[(-1,1)(a,k)=(0,1)\Rightarrow (k=1\wedge -a+k+a=0) \Rightarrow 1=k=0\] and so $(-1,1)$ cannot be invertible.
	
\item[$\bullet$] If $A^2=0$, then $A$ is complete. Indeed, whenever $k\neq 0$, we have
\[ (a,k)(-k^{-2}a, k^{-1})=(k^{-1}a+k(-k^{-2}a), kk^{-1})=(k^{-1}a-k^{-1}a, 1)=(0,1),\]
and so $(a,k)$ is invertible for every $a\in A$.
\end{itemize}


\end{document}